\numberwithin{equation}{section}
\theoremstyle{plain}
\newtheorem{Theorem}{Theorem}[section]
\newtheorem{Lemma}[Theorem]{Lemma}
\theoremstyle{remark}
\newtheorem{Rem}[Theorem]{Remark}
\theoremstyle{definition}
\newcommand{\ts}[1]{t^{(#1)}}
\DeclareMathOperator{\N}{\mathbb{N}}
\DeclareMathOperator{\R}{\mathbb{R}}
\DeclareMathOperator{\Prob}{\mathbb{P}}
\DeclareMathOperator{\Var}{\mathrm{Var}}
\DeclareMathOperator{\E}{\mathbb{E}}
\DeclareMathOperator{\1}{\mathbbm{1}}
\title{Weak convergence of renewal shot noise processes in the case of slowly varying normalization}
\date{\today}
\author[1]{Alexander Iksanov\thanks{E-mail: iksan@univ.kiev.ua}}
\affil[1]{Faculty of Cybernetics, Taras Shevchenko National University of Kyiv, 01601 Kyiv, Ukraine}
\author[2]{Zakhar Kabluchko\thanks{E-mail: zakhar.kabluchko@uni-muenster.de}}
\affil[2]{Institut f\"{u}r Mathematische Statistik, Westf\"{a}lische Wilhelms-Universit\"{a}t M\"{u}nster, 48149 M\"{u}nster, Germany}
\author[1,2]{Alexander Marynych\thanks{E-mail: marynych@unicyb.kiev.ua}}
\begin{document}

\thispagestyle{empty}
\maketitle

\begin{abstract}
We investigate weak convergence of finite-dimensional
distributions of a renewal shot noise process $(Y(t))_{t\geq 0}$
with deterministic response function $h$ and the shots occurring
at the times $0 = S_0 < S_1 < S_2<\ldots$, where $(S_n)$ is a
random walk with i.i.d.\ jumps. There has been an
outbreak of recent activity around this topic. We are interested
in one out of few cases which remained open: $h$ is regularly varying at
$\infty$ of index $-1/2$ and the integral of $h^2$ is infinite.
Assuming that $S_1$ has a moment of order $r>2$ we use a strong
approximation argument to show that the random fluctuations of
$Y(s)$ occur on the scale $s=t+g(t,u)$ for $u\in [0,1]$, as $t\to\infty$, and, on
the level of finite-dimensional distributions, are well
approximated by the sum of a Brownian motion and a Gaussian process with independent values (the two processes being independent).
The scaling function $g$ above depends on the slowly varying
factor of $h$. If, for instance, $\lim_{t\to\infty}t^{1/2}h(t)\in (0,\infty)$, then
$g(t,u)=t^u$.
\end{abstract}

\noindent
\emph{2010 Mathematics Subject Classification}:                       60F05, 60K05      \\        

\noindent \emph{Keywords}: Gaussian process with independent values $\cdot$ renewal shot noise process $\cdot$ weak convergence of finite-dimensional
distributions

\section{Introduction}  \label{sec:intro}

Let $(\xi_k)_{k\in\N}$ be independent copies of a positive random
variable $\xi$. Define a zero-delayed standard random walk
$(S_n)_{n\in\N_0}$, where $\N_0:=\N\cup\{0\}$, by $$S_0:=0,\quad S_n:=\xi_1+\ldots+\xi_n,\quad
n\in\N.$$ For a locally bounded, measurable function $h:\R^+\to\R$, where $\R^+:=[0,\infty)$, put $Y(t):=\sum_{k\geq
0}h(t-S_k)\1_{\{S_k\leq t\}}$, $t\geq 0$. The process
$Y:=(Y(t))_{t\geq 0}$ is called {\it renewal shot noise process}.

The renewal shot noise processes and their natural generalizations
called {\it random processes with immigration} arise in most of
natural sciences as well as diverse areas of applied probability.
See \cite{Iksanov+Marynych+Meiners:2015-1} for a list of
possible applications and the definition of the latter processes.
A nice survey of earlier relevant literature can be found in
\cite{Vervaat:1979}.

Continuing the line of research initiated in
\cite{Iksanov:2013,Iksanov+Marynych+Meiners:2014,Iksanov+Marynych+Meiners:2015-1,Iksanov+Marynych+Meiners:2015-2}
 we investigate weak convergence of the
renewal shot noise processes. Here is a brief survey of the
previously known results concerning weak convergence of
finite-dimensional distributions which hold under the assumption
that $\sigma^2:=\Var\,\xi<\infty$. As for the case of
infinite variance we refer the reader to the cited papers.

If the law of $\xi$ is nonarithmetic and $h:\R^+\to\R$ is a
c\`{a}dl\`{a}g function such that $|h(t)|\wedge 1$ is directly
Riemann integrable on $\R^+$ (we write $x\wedge y$ for $\min(x,y)$;
the definition of the direct Riemann integrability can be found in
Section \ref{tech}), then the finite-dimensional distributions of
$(Y(t+u))_{u\in\R}$ converge weakly, as $t\to\infty$, to those of
a stationary shot noise process. This is a consequence of Theorem
2.2 in \cite{Iksanov+Marynych+Meiners:2015-2}. Weak convergence of
one-dimensional distributions has earlier been obtained in Theorem
2.1 in \cite{Iksanov+Marynych+Meiners:2014}.

If the law of $\xi$ is nonarithmetic and $h:\R^+\to\R$ is a
locally bounded, a.e.\ continuous, eventually nonincreasing and
non-integrable function with $\int_0^\infty h^2(y){\rm
d}y<\infty$, then $Y(t)-\mu^{-1}\int_0^t h(y){\rm d}y$ converges
in distribution, as $t\to\infty$ (see Theorem 2.4 (C1) in
\cite{Iksanov+Marynych+Meiners:2014}). Here and hereafter
$\mu:=\E\xi$. We believe that the finite-dimensional distributions
of $(Y(t+u)-\mu^{-1}\int_0^{t+u}h(y){\rm d}y)_{u\in\R}$ converge
weakly, but this has never been proved.

Let $h : \R^+ \to\R$ be locally bounded, measurable,
eventually monotone and
\begin{equation}\label{regular}
h(t)\sim t^\beta\ell(t),\quad t\to\infty
\end{equation}
for some $\beta>-1/2$ and some $\ell$ slowly varying\footnote{A
positive measurable function $L$, defined on some neighborhood of
$\infty$, is called {\it slowly varying} at $\infty$ if
$\lim_{t\to\infty} (L(ut)/L(t))=1$ for all $u> 0$.} at $\infty$.
Then
\begin{equation}\label{conv_to_int_bm}
\bigg({Y(ut)-\mu^{-1}\int_0^{ut}h(y){\rm d}y\over \sqrt{\sigma^2\mu^{-3}t} h(t)}\bigg)_{u\geq 0}~\stackrel{\mathrm{f.d.}}{\to}~
\bigg(\int_{[0,\,u]}(u-y)^\beta{\rm d}B(y)\bigg)_{u\geq 0},\quad
t\to\infty,
\end{equation}
where $\stackrel{\mathrm{f.d.}}{\to}$ denotes weak
convergence of finite-dimensional distributions, $(B(u))_{u\geq
0}$ is a Brownian motion. This follows from
Theorem 2.7 in \cite{Iksanov+Marynych+Meiners:2014} in the case
when $h$ is eventually nonincreasing and from \cite{Iksanov:2013}
in the case when $h$ is eventually nondecreasing.

In this paper we treat the borderline situation when $\beta$ in
\eqref{regular} equals $1/2$ yet the function $h^2$ is
nonintegrable. This case bears some similarity with the case
$\beta>-1/2$ (normalization is needed; the limit is Gaussian) and
is very different from the case when $h^2$ is integrable. The
principal new feature of the present case is necessity of
sublinear time scaling as opposed to the time scalings $t+u$ and
$ut$ used for the other regimes.

As might be expected of a transitional regime there are additional
technical complications. In particular, the techniques (tools
related to stationarity; the continuous mapping theorem along with the
functional limit theorem for the first-passage time process of
$(S_n)$) used for the other regimes cannot be exploited here. Our
main technical tool is a strong approximation theorem.

\begin{figure}
\begin{center}
\includegraphics[width=0.99\textwidth]{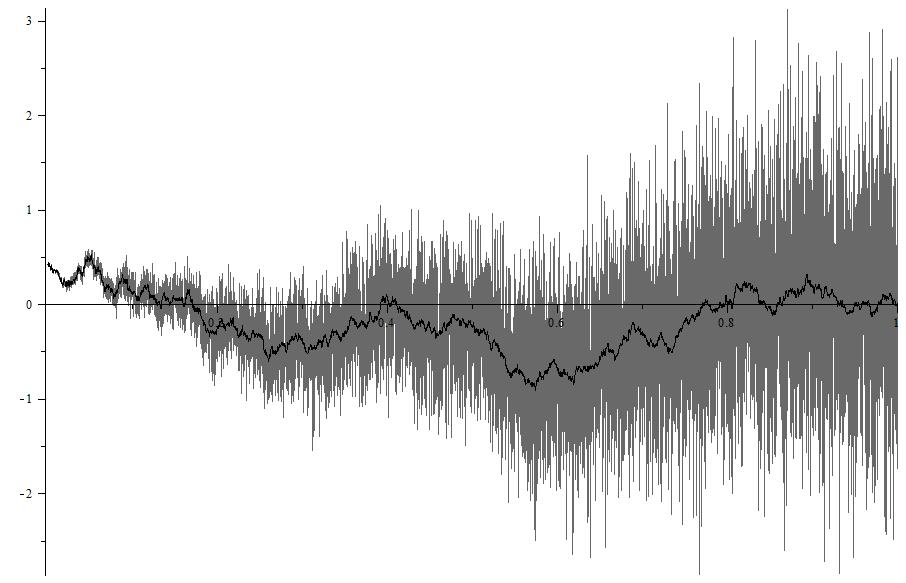}
\label{fig:process}
\caption{Grey graph: The limit process $X$. Black graph: The Brownian motion in reversed time
$B(1-u)$.}
\end{center}
\end{figure}

Now we introduce a limit process $X:=(X(u))_{u\in
[0,1]}$ appearing in Theorem \ref{main1} which is our main result. Let $B:=(B(u))_{u\in
[0,1]}$ denote a Brownian motion independent of $D:=(D(u))_{u\in
[0,1]}$, a centered Gaussian process with independent values which
satisfies $\E D^2(u)=u$. Then we set
$$X(u)=B(1-u)+D(u),\quad u\in [0,1].$$
In
different contexts such a process has arisen in recent papers
\cite{Bogachev+Su:2007,Bourgade:2010}. The presence
of $D$ makes the paths of $X$ highly irregular; see Figure~\ref{fig:process}. In particular, no
version of $X$ lives in the Skorokhod space of right-continuous
functions with finite limits from the left. The covariance
structure of $X$ is very similar to that of $B$: for any $u,v\in
[0,1]$
$${\rm cov}(X(u),X(v))=
\begin{cases}
        (1-u)\wedge(1-v), &   \text{if} \ u\neq v,   \\
        1, & \text{if} \ u=v,
\end{cases}$$
whereas ${\rm cov}(B(1-u),B(1-v))=(1-u)\wedge
(1-v)$. Among others, this shows that neither $X$, nor $X(1-\cdot)$ is a self-similar process.
\begin{Theorem}\label{main1}
Suppose that $\E\xi^r<\infty$ for some $r>2$ and that $h:\R^+\to\R$ is a
right-continuous, locally bounded and eventually nonincreasing
function. If
\begin{equation}\label{a1}
h(t)~ \sim~ t^{-1/2}\ell(t),\quad t\to\infty
\end{equation}
for some $\ell$ slowly varying at $\infty$ such that
$\int_0^\infty h^2(y){\rm d}y=\infty$, then, as $t\to\infty$,
$$\bigg({Y(t+g(t,u))-\mu^{-1}\int_0^{t+g(t,u)}h(y){\rm d}y
\over \sqrt{\sigma^2\mu^{-3}\int_0^th^2(y){\rm d}y}}\bigg)_{u\in
[0,1]} ~\stackrel{\mathrm{f.d.}}{\to}~
(X(u))_{u\in[0,1]},$$
where $\sigma^2=\Var\,\xi$,
$\mu=\E\xi$, and $g:\R^+\times [0,1]\to\R^+$ is any nondecreasing in the second coordinate function that satisfies
\begin{equation}\label{choice}
\lim_{t\to\infty}{\int_0^{g(t,u)}h^2(y){\rm d}y\over \int_0^th^2(y){\rm d}y}=u
\end{equation}
for each $u\in[0,1]$.
\end{Theorem}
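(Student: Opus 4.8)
The plan is to start from the exact pathwise identity obtained by integrating the response function against the centered renewal process. Writing $N(y):=\sum_{k\geq 0}\1_{\{S_k\leq y\}}$ and $R(y):=N(y)-y/\mu$, one has $Y(s)=\int_{[0,s]}h(s-y)\,dN(y)$ while $\mu^{-1}\int_0^s h(z)\,dz=\int_{[0,s]}h(s-y)\,d(y/\mu)$, so that
\[
Y(s)-\mu^{-1}\int_0^s h(y)\,dy=\int_{[0,s]}h(s-y)\,dR(y),\qquad s\geq 0.
\]
Since $\E\xi^r<\infty$ for some $r>2$, I would invoke a strong approximation theorem for the renewal process: on a suitable probability space there is a standard Brownian motion $(W(y))_{y\geq 0}$ with $R(y)=-c\,W(y)+o(y^{1/r})$ a.s.\ as $y\to\infty$, where $c:=\sigma\mu^{-3/2}$, so that the normalization is $\sqrt{\sigma^2\mu^{-3}V(t)}=c\sqrt{V(t)}$ with $V(t):=\int_0^t h^2(y)\,dy$ slowly varying and $V(t)\to\infty$. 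The candidate main term is the centered Gaussian (Wiener) integral $M(s):=-c\int_{[0,s]}h(s-y)\,dW(y)$, and the argument then splits into (i) identifying the limit of the finite-dimensional distributions of $M$ and (ii) showing that the remainder $\int_{[0,s]}h(s-y)\,d(R+cW)(y)$ is negligible on the scale $c\sqrt{V(t)}$.

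Because $M$ is Gaussian, its finite-dimensional distributions are determined by its covariances, and convergence of covariance matrices of centered Gaussian vectors yields convergence in distribution; thus step (i) is a computation. For $s_i=t+g(t,u_i)$ with $u_1<u_2$ and $\delta:=s_2-s_1$, the Wiener isometry gives
\[
\E[M(s_1)M(s_2)]=c^2\int_0^{s_1}h(s_1-y)h(s_2-y)\,dy=c^2\int_0^{s_1}h(z)h(z+\delta)\,dz.
\]
Splitting this integral at $z=\delta$, using $h(z+\delta)\sim h(z)$ for $z\gg\delta$ and $h(z+\delta)\sim h(\delta)$ for $z\ll\delta$, and invoking the Karamata estimate $\ell(x)=o(\sqrt{V(x)})$ (a consequence of the slow variation of $V$, which also makes the $z\ll\delta$ block negligible), one finds the cross term is asymptotic to $c^2(V(s_1)-V(\delta))$. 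Now \eqref{choice} forces $V(g(t,u_i))\sim u_iV(t)$ and hence $g(t,u_2)/g(t,u_1)\to\infty$ (otherwise the slowly varying $V$ would give ratio $1\neq u_2/u_1$), so that $\delta\sim g(t,u_2)$, $V(\delta)\sim u_2V(t)$ and $V(s_1)\sim V(t)$. Therefore $\E[M(s_1)M(s_2)]/(c^2V(t))\to 1-u_2=(1-u_1)\wedge(1-u_2)$, whereas on the diagonal $\E[M(s)^2]/(c^2V(t))=V(s)/V(t)\to 1$. This is exactly the covariance structure of $X$, and the discontinuity across the diagonal is precisely what produces the decomposition $X=B(1-\cdot)+D$: the continuous part $(1-u)\wedge(1-v)$ comes from renewals of age exceeding $g(t,\cdot)$ and yields the time-reversed Brownian motion, while the diagonal jump of size $u$ comes from the renewals in $(t,s]$ and yields the independent-values process $D$.

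The technical heart, and the step I expect to be the main obstacle, is (ii). A naive bound using only $\sup_{y\leq s}|R(y)+cW(y)|=o(s^{1/r})$ fails, since integrating against $h(s-\cdot)$ leaves a boundary contribution of order $s^{1/r}$, which dwarfs the slowly varying scale $\sqrt{V(t)}$. The resolution is to integrate by parts and exploit a cancellation: after discarding the $O_P(1)$ contribution of the boundedly many shots of age at most some fixed $b$ beyond which $h$ is nonincreasing, one writes $\rho:=R+cW$, $H(y):=h(s-y)$, and obtains (up to the usual jump corrections)
\[
\int_{[0,s]}h(s-y)\,d\rho(y)=h(s)\bigl(\rho(s)-\rho(0)\bigr)-\int_{[0,s]}\bigl(\rho(y)-\rho(s)\bigr)\,dH(y),
\]
in which the dangerous term $h(0)\rho(s)$ has disappeared. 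The first summand is $o(s^{1/r})h(s)=o(s^{1/r-1/2}\ell(s))=o(1)$ exactly because $r>2$; this is where the moment hypothesis enters. For the second summand one observes that the nondecreasing $H$ concentrates $dH$ at recent ages (small $s-y$), where $\rho(y)-\rho(s)$ is a short increment of $R+cW$; splitting the age domain and estimating these increments by the diffusive bound $O_P(\sqrt{s-y})$ on short scales and by the coupling bound $o(s^{1/r})$ on long scales, together with the Karamata estimates above, should yield a bound of order $o(\sqrt{V(t)})$, uniformly over the finitely many points $s_1,\dots,s_n$ in play. Carrying out this last estimate with the correct interplay between the strong-approximation rate, the critical index $-1/2$ (which makes every bound borderline), and the slow variation of $V$ is where the bulk of the work lies. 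Once (i) and (ii) are established, $M(s_i)/(c\sqrt{V(t)})$ and the normalized increments of $Y$ share the same limiting finite-dimensional distributions, namely those of $(X(u_i))$, which completes the proof.
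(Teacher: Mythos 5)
Your step (i) is sound: the covariance computation for the Wiener integrals $M(s_i)$ is essentially the paper's Lemma~\ref{variance}, and your observation that a single Gaussian integral evaluated at the times $t+g(t,u_i)$ already exhibits the diagonal covariance discontinuity of $X$ is correct. The genuine gap is step (ii), and it is not merely ``the bulk of the work'': the estimate you propose provably cannot produce the required bound $o_P(\sqrt{V(t)})$, where $V(t)=\int_0^t h^2(y)\,dy$ is \emph{slowly varying}. After your integration by parts, the measure $dH(y)=d_y h(s-y)$ carries essentially all of its mass at small ages $a=s-y$, exactly where you can only use the diffusive bound, and that bound integrates to
\[
\int_0^{\Delta}\sqrt{a}\,d(-h(a))\;=\;-\sqrt{\Delta}\,h(\Delta)+\tfrac12\int_0^{\Delta}a^{-1/2}h(a)\,da\;\asymp\;\int_1^{\Delta}a^{-1}\ell(a)\,da ,
\]
while the long-scale piece is of order $s^{1/r}h(\Delta)\asymp s^{1/r}\Delta^{-1/2}\ell(\Delta)$, which forces $\Delta\gtrsim s^{2/r}$ up to slowly varying factors. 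Hence your bound is at least of order $\int_1^{s^{2/r}}a^{-1}\ell(a)\,da$; for $\ell\equiv 1$ this is $\asymp\log t$, whereas $\sqrt{V(t)}\asymp\sqrt{\log t}$. The loss is by an unbounded factor and no choice of the splitting point repairs it. Moreover, this is not a failure of bookkeeping: claim (ii) itself does not follow from the strong approximation theorem you invoke, which controls only $\sup_{y\le T}|\rho(y)|=O(T^{1/r})$ and gives no improvement for increments $\rho(y)-\rho(s)$ with $y$ near $s$. A function $\rho$ obeying the sup bound but behaving like $-\min((s-y)^{1/2},s^{1/r})$ near the endpoint makes $\int_{[0,s]}h(s-y)\,d\rho(y)$ of order $\int_1^{s^{2/r}}a^{-1}\ell(a)\,da\gg\sqrt{V(t)}$. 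The structural reason is that on the slowly varying scale the fluctuations of $Y(s)$ are spread over all $\asymp\log s$ dyadic age scales, each contributing a vanishing fraction of the variance; a coupling accurate only in sup-norm at polynomial precision, used in forward time, cannot resolve this.

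This obstacle is precisely what the paper's proof is built to avoid, and the missing idea is \emph{time reversal}. The paper first reduces to a smoothed response $\bar h$ and to the stationary renewal process $N^\ast$, integrates by parts, and then uses the backward stationarity \eqref{eq:N^ backward in time} to replace the increments $N^\ast(s)-N^\ast((s-y)-)$ by increments of a stationary renewal process started near the origin of the \emph{age} variable. After this reversal the strong approximation error at age $y$ is $O(y^{1/r})$ (Lemma~\ref{appr}) --- small exactly where $d(-\bar h)$ has its mass --- and $\int y^{1/r}\,d(-\bar h(y))<\infty$ because $1/r<1/2$, so the total coupling error is a.s.\ bounded, hence negligible against $\sqrt{V(t)}\to\infty$. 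The price of reversing time at the several endpoints $t+g(t,u_i)$ is that one can no longer work with a single Brownian motion: the paper must first make the relevant increments genuinely independent via the coupling construction of its Step~3, and then approximate each independent piece by its own Brownian motion; the independent-values component $D$ of $X$ then emerges from this independence rather than from the covariance of one Gaussian integral. Without the time reversal (or some substitute giving coupling errors aligned with the age variable), your scheme cannot be completed as stated.
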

\begin{Rem}
To facilitate comparison with \eqref{conv_to_int_bm}, observe that, under \eqref{regular} with $\beta>-1/2$,
$$\sqrt{\int_0^th^2(y){\rm d}y}~ \sim~
(2\beta+1)^{-1/2}\sqrt{t}h(t),\quad t\to\infty,$$ see Lemma
\ref{bgt1}(a), and therefore the normalization in \eqref{conv_to_int_bm} can be replaced (up to a multiplicative constant) by $(\int_0^th^2(y){\rm d}y)^{1/2}$.
\end{Rem}
\begin{Rem}
Set $m(t):=\int_0^th^2(y){\rm d}y$, $t>0$ and observe that, under \eqref{a1}, $m$ is a slowly varying function (see Lemma \ref{bgt1}(b)) diverging to $+\infty$. Since $m$ is nondecreasing and continuous, the generalized inverse function $m^\leftarrow$ is increasing. Putting $g(t,u)=m^\leftarrow(um(t))$ gives us a nondecreasing in $u$ function that satisfies \eqref{choice}.
\end{Rem}

\begin{Rem}\label{second}
Here we point out three types of possible time scalings which correspond to 'moderate', 'slow' and 'fast' slowly varying $\ell$ in \eqref{a1}.

\noindent {\sc 'Moderate' $\ell$}. If
\begin{equation}\label{a2}
\ell(t)=(\log t)^{(\rho-1)/2}L(\log t)
\end{equation}
for some $\rho>0$ and some $L$ slowly varying at $\infty$, then
$$m(t)=\int_0^t h^2(y){\rm d}y =\int_0^{\log t} h^2(e^y)e^y{\rm
d}y \sim \rho^{-1}(\log t)^\rho L^2(\log t),\quad t\to\infty$$
by Lemma \ref{bgt1}(a) because $h^2(e^y)e^y \sim  y^{\rho-1}L^2(y)$.
Hence, we may take $g(t,u)=t^{u^{1/\rho}}$.
\end{Rem}

\noindent {\sc 'Slow' $\ell$}. If $$\ell(t)=(\log
t)^{-1/2}(\log\log t)^{(\rho-1)/2}L(\log\log t)$$ for some
$\rho>0$ and some $L$ slowly varying at $\infty$, then $$m(t) \sim \rho^{-1}(\log\log t)^\rho L^2(\log\log t)$$ (which can be checked as in the 'moderate' case) and one may take
$g(t,u)=\exp((\log t)^{u^{1/\rho}})$.

\noindent {\sc 'Fast' $\ell$}. If $$\ell(t)=\exp((\rho/2)(\log t)^\gamma)(\log t)^{(\gamma-1)/2}L(\exp((\log t)^\gamma))$$ for some $\rho>0$, $\gamma\in(0,1)$ and some $L$ slowly varying at $\infty$, then $$m(t)~\sim~(\gamma\rho)^{-1}\exp(\rho(\log t)^\gamma)L^2(\exp((\log t)^\gamma))$$ and one may take $g(t,u)=tu^{(\gamma\rho)^{-1}(\log t)^{1-\gamma}}$.

Here is a brief explanation of why the non-standard time scaling $g(t,u)$ appears in Theorem~\ref{main1}. To investigate the joint distribution of $Y(t)$ and $Y(t+g(t,u))$ let us single out the contribution of the points $S_k$ located in the segment $[0, t-g(t,u)]$ by writing
\begin{align*}
Y(t) &= \sum_{k\geq 0} h(t-S_k)\1_{\{S_k\leq t-g(t,u)\}}   + \Delta_1(t),\\
Y(t+g(t,u)) &= \sum_{k\geq 0} h(t + g(t,u)-S_k)\1_{\{S_k\leq t-g(t,u\}} + \Delta_2(t)
\end{align*}
with obvious choices for the remainder terms $\Delta_1(t)$ and $\Delta_2(t)$. Assuming for a moment that $(S_k)$ are the arrival times in a Poisson process of unit intensity, we infer
$$
\Var \left(\sum_{k\geq 0} h(t-S_k)\1_{\{S_k\leq t-g(t,u)\}}\right)
=
\int_{g(t,u)}^{t} h^2(y) {\rm d} y = m(t) - m(g(t,u)) \sim (1-u) m(t)
$$
in view of \eqref{choice}. Similarly, since $m$ is slowly varying which implies $m(t+g(t,u))\sim m(t)$, we obtain
$$
\Var \left(\sum_{k\geq 0} h(t+g(t,u)-S_k)\1_{\{S_k\leq t-g(t,u)\}}\right)
=
\int_{2g(t,u)}^{t+g(t,u)} h^2(y) {\rm d} y \sim (1-u) m(t).
$$
Arguing as above, one can show that the variances of
$\Delta_1(t)$ and $\Delta_2(t)$ are of order $um(t)$, and, moreover, that $\Delta_1(t)$ and $\Delta_2(t)$ are asymptotically independent. Thus, both variables $Y(t)$ and $Y(t+g(t,u))$ have variances of order $m(t)$ and the principal contribution to their covariance (which is asymptotic to $(1-u)m(t)$) comes from the points $S_k$ located in the segment $[0, t-g(t,u)]$. For the renewal shot noise process other than Poisson finding variances, let alone covariances, is a formidable task. Therefore, the argument above should be deemed a useful hint rather than a general approach.

The rest of the paper is structured as follows. In Section \ref{tech} we collect some auxiliary results which are then used in Section \ref{proo} to prove Theorem \ref{main1}.

We stipulate hereafter that all unspecified limit relations hold as $t\to\infty$.

\section{Technical background}\label{tech}

Throughout the section we assume, without further notice, that
$\mu=\E\xi<\infty$.

Let $S^\ast_0$ be a random variable which is independent of
$(S_k)$ and has distribution
$$\Prob\{S^\ast_0\leq x\}=\mu^{-1}\int_0^x \Prob\{\xi>y\}{\rm d}y, \ \ x\geq 0$$ in the case that the distribution of $\xi$ is nonarithmetic, and
$$\Prob\{S^\ast_0=kd\}=(d/\mu)\Prob\{\xi\geq kd\},\quad k\in \N$$ in the case that the distribution of $\xi$ is
arithmetic with span $d>0$. Now we set
\begin{equation}\label{station_rw_def}
S^\ast_k:=S^\ast_0+S_k,\quad k\in\N_0
\end{equation}
and define
$$N(t):=\inf\{k\in\N_0: S_k>t\}=\#\{k\in\N_0: S_k\leq t\},\quad t\geq 0$$ and
$$N^\ast(t):=\#\{k\in\N_0: S^\ast_k\leq t\}, \ \ t\geq
0.$$ Observe that $(N^\ast(x))_{x\geq 0}$ has stationary
increments. It will be important for us that the
finite-dimensional distributions of the increments of
$(N^\ast(x))$ are invariant not only forward in time, but also
backward in time. The latter means that
\begin{equation} \label{eq:N^ backward in time}
(N^\ast(t)-N^\ast((t-s)-): 0 \leq s \leq t)
~\stackrel{\mathrm{f.d.}}{=}~  (N^\ast(s): 0 \leq s \leq t)
\end{equation}
for every $t>0$, see Proposition 3.1 in
\cite{Iksanov+Marynych+Meiners:2014} for more details. Here $\stackrel{\mathrm{f.d.}}{=}$ denotes equality of finite-dimensional distributions. Also, we
have to recall (see p.~55 in \cite{Gut:2009} for the proof) that
$N(t)$ enjoys the following (distributional) subadditivity
property
\begin{equation}\label{yyy}
N(t+s)-N(s)\overset{d}{\leq} N(t), \ \ s,t\geq 0,
\end{equation}
where $X\overset{{\rm d}}{\leq}Y$ means that
$\Prob\{X>z\}\leq\Prob\{Y>z\}$ for all $z\in\R$.

A function $f:\R^+\to\R^+$ is called directly
Riemann integrable (dRi) if (a) $\overline{\sigma}(h)<\infty$ for
some (hence all) $h>0$ and (b) $\lim_{h\to
0+}(\overline{\sigma}(h)-\underline{\sigma}(h))=0$, where
$\overline{\sigma}(h):=\sum_{n\geq 1}\sup_{(n-1)h\leq y<nh}f(y)$
and $\underline{\sigma}(h):=\sum_{n\geq 1}\inf_{(n-1)h\leq
y<nh}f(y)$. A function $f:\R^+\to\R$ is called dRi, if the
functions $f^+:=f\vee 0$ and $f^-:=-(f\wedge 0)$ are dRi (here $x\vee y:=\max(x,y)$). If $f$
is dRi, and the law of $\xi$ is nonarithmetic, then, according to
the key renewal theorem,
$$\lim_{t\to\infty}\E\sum_{k\geq 0}f(t-S_k)\1_{\{S_k\leq
t\}}=\mu^{-1}\int_0^\infty f(y){\rm d}y<\infty.$$ If the law of
$\xi$ is $d$-arithmetic, then this limit relation only holds along
the subsequence $(nd)_{n\in\N}$. In the proof of Theorem \ref{main1} we
want to treat the nonarithmetic and the arithmetic cases
simultaneously. This will be accomplished on using the following
result.
\begin{Lemma}\label{bounded}
If $f:\R^+\to\R^+$ is dRi, then $$\E\sum_{k\geq
0}f(t-S_k)\1_{\{S_k\leq t\}}=O(1),\quad t\to\infty.$$ The same is
true with $S^\ast_k$ replacing $S_k$.
\end{Lemma}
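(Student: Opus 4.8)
The plan is to derive a bound that is uniform in $t$ (hence a fortiori $O(1)$) and that avoids the key renewal theorem entirely, so that the arithmetic and nonarithmetic cases are covered at once. The only probabilistic input I would use is the distributional subadditivity \eqref{yyy}.

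Fix $h>0$ and write $U(t):=\E N(t)=\sum_{k\ge0}\Prob\{S_k\le t\}$ for the renewal function, which is finite for finite $t$ since $\xi>0$ a.s. Taking expectations in \eqref{yyy} (stochastic dominance preserves the ordering of means, as $\E N(t+s)-\E N(s)=\int_0^\infty \Prob\{N(t+s)-N(s)>z\}\,{\rm d}z$) gives $U(t+s)-U(s)\le U(t)$ for all $s,t\ge0$, and setting $t=h$ yields the crucial uniform estimate
$$U(s+h)-U(s)\le U(h),\qquad s\ge0,$$
i.e. the expected number of $S_k$ in any interval of length $h$ is at most $U(h)<\infty$, irrespective of the location of the interval.

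Next I would bound the sum pathwise. Partitioning $[0,\infty)$ into the blocks $[(n-1)h,nh)$, $n\ge1$, and, whenever $t-S_k$ falls in such a block, replacing $f(t-S_k)$ by the supremum of $f$ over that block, I obtain
$$\sum_{k\ge0}f(t-S_k)\1_{\{S_k\le t\}}\le\sum_{n\ge1}\Big(\sup_{(n-1)h\le w<nh}f(w)\Big)\,\#\{k:S_k\in(t-nh,\,t-(n-1)h]\}.$$
Taking expectations and bounding the expected occupancy of each length-$h$ block by $U(h)$ gives
$$\E\sum_{k\ge0}f(t-S_k)\1_{\{S_k\le t\}}\le U(h)\sum_{n\ge1}\sup_{(n-1)h\le w<nh}f(w)=U(h)\,\overline{\sigma}(h),$$
which is finite because $f$ is dRi and, crucially, independent of $t$.

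For the version with $S^\ast_k$ I would condition on $S^\ast_0$: writing $S^\ast_k=S^\ast_0+S_k$ and conditioning on $\{S^\ast_0=s\}$, the inner sum equals $\sum_{k\ge0}f((t-s)-S_k)\1_{\{S_k\le t-s\}}$ (and vanishes for $s>t$), which is the previous quantity with time argument $t-s$. Since the bound $U(h)\overline{\sigma}(h)$ does not depend on the time argument, it survives integration over the law of $S^\ast_0$. The main, indeed the only non-bookkeeping, obstacle is the uniform control of the renewal measure of length-$h$ intervals, and this is delivered immediately by \eqref{yyy}; the remainder is the standard dRi partitioning argument.
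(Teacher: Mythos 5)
Your proof is correct, and it takes a genuinely different route from the paper: the paper does not actually prove the lemma, its entire proof being a citation of Lemma 8.2 in \cite{Iksanov+Marynych+Vatutin:2015} for the $S_k$-part, with the $S^\ast_k$-part declared ``analogous''. You instead give a self-contained argument whose only probabilistic input is \eqref{yyy}: taking expectations there yields the uniform renewal-measure bound $U(s+h)-U(s)\le U(h)$ for $U(t):=\E N(t)$, and the standard dRi block decomposition then gives the bound $U(h)\,\overline{\sigma}(h)$, which does not depend on $t$; conditioning on $S^\ast_0$, which is independent of $(S_k)$, transfers the same constant to the stationary version. This mechanism is presumably close to what underlies the cited external lemma, but relative to the present paper your argument is a real gain: it is uniform in $t$ (stronger than $O(1)$), it visibly treats the arithmetic and nonarithmetic cases simultaneously --- which is exactly why the paper needs this lemma in the first place, namely to bypass the key renewal theorem --- and it replaces the appeal to analogy by a one-line conditioning step. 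Two minor points you pass over, neither of which is a gap: (i) finiteness of $U(h)$ is a standard fact for a renewal process with $\xi>0$ a.s.\ (and under the paper's standing assumption $\mu<\infty$ it also follows from Lorden's inequality, which the paper itself invokes later); (ii) for blocks $(t-nh,\,t-(n-1)h]$ that straddle the origin, the expected occupancy is bounded by $U(h)$ via monotonicity of $U$ rather than via subadditivity, since in that case $t-(n-1)h<h$.
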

\begin{proof}
The part concerning $S_k$ is Lemma 8.2 in
\cite{Iksanov+Marynych+Vatutin:2015}. The proof of the second part
is analogous.
\end{proof}

The next lemma is a strong approximation result which is one of
the main technical tools in the proof of Theorem \ref{main1}.

\begin{Lemma}\label{appr}
Suppose that $\E\xi^r<\infty$ for some $r>2$. Then there exists a Brownian motion $W$ such that,
for some random, almost surely finite $t_0>0$ and deterministic $A>0$,
$$|N^\ast(t)-\mu^{-1}t-\sigma\mu^{-3/2}W(t)|\leq A t^{1/r}$$ for all $t\geq t_0$, where $\sigma^2=\Var\,\xi$ and $\mu=\E\xi$.
\end{Lemma}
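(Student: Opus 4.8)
The plan is to derive the statement from the Koml\'{o}s--Major--Tusn\'{a}dy strong approximation for the partial sums $S_n=\xi_1+\ldots+\xi_n$, to transfer it to the counting process by inverting, and finally to strip off the stationary delay $S^\ast_0$; throughout I assume $\sigma^2>0$, the case $\sigma^2=0$ being degenerate.

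\emph{Reduction to the zero-delayed process.} Since $S^\ast_k=S^\ast_0+S_k$, we have $N^\ast(t)=N(t-S^\ast_0)$ for all $t\geq S^\ast_0$, where $N$ is the ordinary renewal counting function. As $S^\ast_0$ is a.s.\ finite and independent of $(S_k)$, it suffices to prove the approximation for $N$ with some Brownian motion $W$ and then to put $t-S^\ast_0$ in place of $t$: the linear term changes by $-\mu^{-1}S^\ast_0=O(1)$, the error $o((t-S^\ast_0)^{1/r})$ remains $o(t^{1/r})$, and the increment $W(t-S^\ast_0)-W(t)$ over the fixed (a.s.\ finite) window $S^\ast_0$ is $O((\log t)^{1/2})$ a.s.\ and hence negligible. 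The same $W$ thus serves for $N^\ast$.

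\emph{Inversion.} By KMT, since $\E\xi^r<\infty$ with $r>2$ there is a Brownian motion $B$ with $S_n-\mu n-\sigma B(n)=o(n^{1/r})$ a.s. From the sandwich $S_{N(t)}\leq t<S_{N(t)+1}$, the bound $\xi_{N(t)+1}=o(t^{1/r})$ a.s.\ (Borel--Cantelli from $\E\xi^r<\infty$, together with $N(t)=O(t)$ a.s.) and the strong law $N(t)\sim\mu^{-1}t$ a.s., one obtains
\[
N(t)=\mu^{-1}t-\mu^{-1}\sigma B(N(t))+o(t^{1/r})\qquad\text{a.s.}
\]
Putting $W(t):=\mu^{1/2}B(\mu^{-1}t)$, a standard Brownian motion, so that $\mu^{-1}\sigma B(\mu^{-1}t)=\sigma\mu^{-3/2}W(t)$, it remains only to replace $B(N(t))$ by $B(\mu^{-1}t)$.

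\emph{The replacement (main obstacle).} This is the delicate step. The law of the iterated logarithm gives $|N(t)-\mu^{-1}t|=O((t\log\log t)^{1/2})$ a.s., and L\'{e}vy's modulus of continuity then yields $|B(N(t))-B(\mu^{-1}t)|=O(t^{1/4}(\log t)^{1/2}(\log\log t)^{1/4})$ a.s. Here the Bahadur--Kiefer phenomenon surfaces: the argument of $B$ is displaced by an amount of genuine order $t^{1/2}$, so the replacement error cannot be pushed below $t^{1/4}$ up to logarithmic factors, however many moments $\xi$ has. Consequently the construction yields
\[
N(t)-\mu^{-1}t-\sigma\mu^{-3/2}W(t)=o(t^{1/r'})\qquad\text{a.s.}
\]
for every $r'\in(2,\min(r,4))$; for $r<4$ this is exactly the stated exponent, and since $\E\xi^r<\infty$ forces finiteness of all moments of order in $(2,r]$ one may simply work with such an $r'<4$, which is all that is needed in the proof of Theorem~\ref{main1}. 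Together with the reduction above and the freedom to replace $W$ by $-W$, this gives the lemma; the only real work is the replacement step, the preceding two being routine once the approximation of $(S_n)$ is in hand.
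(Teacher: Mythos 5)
Your construction is genuinely different from the paper's proof, which is pure citation: formula (3.13) of \cite{Csorgo+Horvath+Steinebach:1987} for the strong approximation of $S_{[u]}$, Theorem 3.1 of that paper to transfer it to the delayed walk's inverse process $N^\ast$ at the \emph{same} rate $O(t^{1/r})$, and Remark 3.1 there (Blumenthal $0$--$1$ law) to make $A$ deterministic. Your treatment of the delay $S_0^\ast$ and your inversion step are fine, and your little-$o$ formulation even makes the deterministic constant automatic (take $A=1$ beyond a random $t_0$). The genuine gap lies in the replacement step and in the conclusion you draw from it. By setting $W(t)=\mu^{1/2}B(\mu^{-1}t)$ you insist that the Brownian motion approximating $N$ be a deterministic time change of the one approximating the partial sums; for \emph{that} coupling the Bahadur--Kiefer error of order $t^{1/4}$ is indeed unavoidable. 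But the lemma asks only for \emph{some} Brownian motion, and for that problem the $t^{1/4}$ barrier is not real. If $\xi$ is exponential, $N$ is a Poisson process and KMT applied directly to the increments $(N(n)-N(n-1))_{n\geq 1}$ gives $\sup_{s\leq t}|N(s)-s-W(s)|=O(\log t)$ a.s.\ for a suitable $W$, far below $t^{1/4}$. In general one can invert exactly: with $B$ the KMT Brownian motion for $(S_n)$, let $M$ be the first-passage-time process of $u\mapsto \mu u+\sigma B(u)$; uniform closeness of $S_{[\cdot]}$ and this drifted Brownian motion transfers to the inverses, giving $\sup_{s\leq t}|N(s)-M(s)|=O(t^{1/r}+\log t)$ a.s., while $M(s)=\mu^{-1}s-\sigma\mu^{-3/2}V(s)$ with $V$ a compensated inverse Gaussian L\'{e}vy process having exponential moments, which is in turn within $O(\log t)$ of a Brownian motion by KMT. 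This yields the stated rate for \emph{every} $r>2$, and it is, in substance, what the paper obtains by citing Theorem 3.1 of \cite{Csorgo+Horvath+Steinebach:1987}.

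Consequently, your assertion that the replacement error ``cannot be pushed below $t^{1/4}$ \ldots\ however many moments $\xi$ has'' is true of your particular coupling but false as a statement about the lemma, and your argument proves Lemma~\ref{appr} only for $2<r<4$. Your mitigating observation is correct: the lemma is used only to make $\int_{(t_0,\infty)}y^{1/r}\,{\rm d}(-\bar{h}(y))$ finite and to kill error terms, so any exponent $1/r'$ with $r'>2$ suffices for Theorem~\ref{main1}, and the weaker statement you prove would do for the paper. But as a proof of the lemma as stated, the case $r\geq 4$ is a real gap, and the gap is conceptual rather than technical: the Wiener process for the renewal process must be constructed afresh from the inverse process, not inherited from the partial sums by a time change.
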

\begin{proof}
According to formula (3.13) in
\cite{Csorgo+Horvath+Steinebach:1987}, there exists a Brownian
motion $W$ such that $$\sup_{0\leq u\leq t}|S_{[u]}-\mu u-\sigma
W(u)|=O(t^{1/r})\quad\text{a.s.}$$ This obviously implies
$$\sup_{0\leq u\leq t}|S^\ast_{[u]}-\mu u-\sigma W(u)|=O(t^{1/r})\quad\text{a.s.}$$ and thereupon $$\sup_{0\leq u\leq
t}|N^\ast(u)-\mu^{-1}u- \sigma\mu^{-3/2}W(u)|=O(t^{1/r})\quad\text{a.s.}$$ by Theorem 3.1 in
\cite{Csorgo+Horvath+Steinebach:1987}. This proves the lemma with possibly random $A$. As
noted in Remark 3.1 of the cited paper the Blumenthal $0-1$ law
ensures that the constant $A$ can be taken deterministic.
\end{proof}

Lemma \ref{bgt1} given below collects two versions of Karamata's
theorem, the results used at least twice in the
paper. Parts (a) and (b) are Proposition 1.5.8 and Proposition
1.5.9a in \cite{Bingham+Goldie+Teugels:1989}, respectively.
\begin{Lemma}\label{bgt1}
Let $r$ be a locally bounded function which varies regularly at $\infty$ with index $\alpha$, i.e., $r(t)=t^\alpha L(t)$ for some $L$ slowly varying at $\infty$.

\noindent (a) If $\alpha>-1$, then $$\int_0^t r(y){\rm
d}y~\sim~(\alpha+1)^{-1}tr(t),\quad t\to\infty.$$

\noindent (b) If $\alpha=-1$, then $t\to \int_0^t r(y){\rm d}y$ is a slowly varying function and
$$\lim_{t\to\infty}{tr(t)\over\int_0^t r(y){\rm d}y}=0.$$
\end{Lemma}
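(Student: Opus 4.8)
The plan is to treat both parts by reducing the integral to the regularly varying scaling via the substitution $y=ts$ and then invoking the two standard facts about regular variation that are not used elsewhere in the paper: the uniform convergence theorem (UCT), i.e.\ $r(ts)/r(t)\to s^\alpha$ uniformly for $s$ in compact subsets of $(0,\infty)$, and Potter's bound, i.e.\ for every $\delta>0$ there are $C>0$ and $t_0$ with $r(ts)/r(t)\le C\max(s^{\alpha+\delta},s^{\alpha-\delta})$ whenever $t\ge t_0$ and $ts\ge t_0$ (both are in \cite{Bingham+Goldie+Teugels:1989}). Throughout I use that $r(t)=t^\alpha L(t)$ with $L>0$ slowly varying, so $r>0$ for large $t$, while local boundedness gives $\int_0^T|r(y)|\,\mathrm{d}y<\infty$ for every fixed $T$.

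For part (a), the substitution yields
\[
\frac{\int_0^t r(y)\,\mathrm{d}y}{t\,r(t)}=\int_0^1\frac{r(ts)}{r(t)}\,\mathrm{d}s .
\]
Since a slowly varying $L$ eventually exceeds $t^{-\eta}$, we have $t\,r(t)=t^{\alpha+1}L(t)\ge t^{\alpha+1-\eta}\to\infty$ for $\eta<\alpha+1$; hence the contribution of the range $ts\le t_0$, equal to $(t\,r(t))^{-1}\int_0^{t_0}r(y)\,\mathrm{d}y$, tends to $0$. On the complementary range I would fix $\delta$ with $\alpha-\delta>-1$; then for $s\in(0,1]$ Potter's bound dominates the integrand by the integrable function $Cs^{\alpha-\delta}$, while pointwise $r(ts)/r(t)\to s^\alpha$. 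Dominated convergence then gives $\int_0^1 s^\alpha\,\mathrm{d}s=(\alpha+1)^{-1}$, which is the claim.

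For part (b), set $m(t)=\int_0^t r(y)\,\mathrm{d}y$, which is eventually nondecreasing because $r\ge0$ for large $t$. The same substitution together with the UCT gives, for every fixed $\lambda>0$,
\[
m(\lambda t)-m(t)=t\,r(t)\int_1^\lambda\frac{r(ts)}{r(t)}\,\mathrm{d}s\sim t\,r(t)\log\lambda=L(t)\log\lambda,
\]
so $m(\lambda t)/m(t)=1+\log\lambda\cdot L(t)/m(t)+o(L(t)/m(t))$. Both assertions of (b) therefore reduce to the single statement $t\,r(t)/m(t)=L(t)/m(t)\to0$. When $m(\infty)<\infty$ this is immediate, since then $L(t)\sim m(\mathrm{e}t)-m(t)\to0$ while $m(t)\to m(\infty)\in(0,\infty)$. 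The substantive case is $m(t)\to\infty$, and this is the step I expect to be the main obstacle: the relation $m(\lambda t)-m(t)\sim L(t)\log\lambda$ does not by itself force $L(t)=o(m(t))$ for a general divergent nondecreasing $m$ (a geometrically growing $m$ is consistent with comparable successive increments), so the slow variation of $L$ must be brought in explicitly. To do so I would pass to $d_n:=m(\mathrm{e}^{n+1})-m(\mathrm{e}^n)\sim L(\mathrm{e}^n)$, which is eventually positive with $d_{n+1}/d_n\to1$ by slow variation of $L$ and with $\sum_n d_n=\infty$. An elementary geometric-sum estimate shows that any eventually positive sequence with $d_{n+1}/d_n\to1$ and divergent sum satisfies $d_n=o(\sum_{k\le n}d_k)$; applied here it gives $L(\mathrm{e}^n)\sim d_n=o(m(\mathrm{e}^n))$, and the UCT interpolates this to $L(t)/m(t)\to0$ for continuous $t$. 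Substituting back into the displayed ratio shows simultaneously that $m$ is slowly varying and that $t\,r(t)/m(t)\to0$.
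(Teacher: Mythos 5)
Your proof is correct, but note that the paper itself offers no proof of this lemma: parts (a) and (b) are simply quoted as Propositions 1.5.8 and 1.5.9a of \cite{Bingham+Goldie+Teugels:1989} (the direct half of Karamata's theorem and its boundary case), so the comparison is between your argument and the standard reference. For part (a) you reproduce essentially the canonical textbook proof: the substitution $y=ts$, Potter's bound with $\delta$ chosen so that $\alpha-\delta>-1$ as an integrable majorant on $(0,1]$, dominated convergence, and a separate treatment of the initial range $\{ts\le t_0\}$ using only local boundedness together with $t\,r(t)\to\infty$; all steps are in order. For part (b) your route is sound and genuinely self-contained: you correctly identify the real difficulty --- the increment relation $m(\lambda t)-m(t)\sim L(t)\log\lambda$ for a single $\lambda$ does not by itself force $L(t)=o(m(t))$ --- and you inject the slow variation of $L$ via the discretization $d_n=m({\rm e}^{n+1})-m({\rm e}^n)\sim L({\rm e}^n)$, the elementary fact that an eventually positive sequence with $d_{n+1}/d_n\to 1$ and divergent sum satisfies $d_n=o\big(\sum_{k\le n}d_k\big)$ (the geometric estimate $d_k\ge (1+\varepsilon)^{-(n-k)}d_n$ for $N\le k\le n$ does prove this), and a UCT interpolation back to continuous $t$, legitimate because $m$ is eventually nondecreasing. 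What your proof buys is independence from the citation; what it costs is length, since the sequence argument can be bypassed: for $m(t)\to\infty$ one has $m(t)\ge m(t/\lambda)+\int_{t/\lambda}^t r(y)\,{\rm d}y$ with $m(t/\lambda)\ge 0$ eventually and $\int_{t/\lambda}^t r(y)\,{\rm d}y\sim L(t)\log\lambda$ by the UCT, whence $\liminf_{t\to\infty} m(t)/L(t)\ge\log\lambda$ for every $\lambda>1$, which is exactly $L(t)=o(m(t))$.

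One small blemish: in the convergent case you assert $m(\infty)\in(0,\infty)$ without justification, and under the literal hypotheses this can fail --- $r$ is guaranteed positive only near $\infty$, so a negative locally bounded ``head'' on a compact interval can force $\int_0^\infty r(y)\,{\rm d}y\le 0$, in which case the conclusion of part (b) is itself false ($m$ is then not eventually positive, hence not slowly varying). This defect belongs to the statement as adapted in the paper, which integrates from $0$ rather than from a point inside the domain of positivity as \cite{Bingham+Goldie+Teugels:1989} does; it is immaterial for the applications, where $r$ is always a square ($h^2$ or $\bar h^2$) and, in the part-(b) uses, $m(t)\to\infty$. Still, your write-up should state the needed positivity as an assumption rather than assert it.
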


\begin{Lemma}\label{variance}
Let $h$ be a nonincreasing function which satisfies all the assumptions of Theorem \ref{main1}.
Then, for any $0\leq b<a\leq 1$,
$$\lim_{t\to\infty}{\int_0^{t+\ts{b}}h(y)h(y+\ts{a}-\ts{b}){\rm d}y\over \int_0^t h^2(y){\rm d}y}=1-a,$$
where $\ts{u}:=g(t,u)$, $u\in[0,1]$ (see \eqref{choice} for the
definition of $g$).
\end{Lemma}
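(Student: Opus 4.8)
The plan is to write $m(t):=\int_0^th^2(y)\,{\rm d}y$ (so the denominator of the ratio in the Lemma is exactly $m(t)$) and to prove that the numerator is asymptotic to $(1-a)\,m(t)$. Throughout I would use that $h$ is regularly varying of index $-1/2$ (hence $h^2$ of index $-1$), that $m$ is slowly varying and diverges to $+\infty$ (Lemma~\ref{bgt1}(b)), and that $m(\ts{u})\sim u\,m(t)$ by the defining property \eqref{choice} of $g$. Put $c:=\ts{a}-\ts{b}$, which is $\geq 0$ because $g$ is nondecreasing in its second argument; since I will show $m(c)\sim a\,m(t)\to\infty$, it follows that $c\to\infty$. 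Two preliminary asymptotic identities will be needed: (A) $m(\kappa(t+\ts{b}))\sim m(t)$ for every fixed $\kappa>0$, and (B) $m(\ts{a}-\ts{b})\sim a\,m(t)$.

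Identity (A) is routine: from $\ts{b}\le\ts{1}$ and $m(\ts{1})\sim m(t)$ one gets $m(t+\ts{1})\sim m(t)$ (distinguishing $\ts{1}\le t$ and $\ts{1}>t$ and using monotonicity together with slow variation at the larger of $t,\ts{1}$), after which the squeeze $m(t)\le m(t+\ts{b})\le m(t+\ts{1})$ and an extra application of slow variation to absorb the factor $\kappa$ give the claim. Identity (B) is the delicate point and I expect it to be the main obstacle, precisely because $g$ is essentially arbitrary subject only to \eqref{choice}, so $\ts{a}$ and $\ts{b}$ are controlled only through $m$. The upper bound $m(\ts{a}-\ts{b})\le m(\ts{a})\sim a\,m(t)$ is immediate from monotonicity. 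For the matching lower bound I would first show that $\ts{b}/\ts{a}$ stays bounded away from $1$: if along some subsequence $\ts{b}/\ts{a}\to1$, then slow variation of $m$ (via the uniform convergence theorem for slowly varying functions) would force $m(\ts{b})/m(\ts{a})\to1$, contradicting $m(\ts{b})/m(\ts{a})\to b/a<1$. Hence $\ts{a}-\ts{b}\ge\delta\,\ts{a}$ for some $\delta>0$ and all large $t$, and, since $\ts{a}\to\infty$, slow variation yields $m(\ts{a}-\ts{b})\ge m(\delta\,\ts{a})\sim m(\ts{a})\sim a\,m(t)$, proving (B).

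With (A) and (B) available I would split the numerator as $\int_0^{c}+\int_{c}^{Kc}+\int_{Kc}^{t+\ts{b}}$ for a fixed integer $K$ and estimate each piece using that $h$ is nonincreasing. On $[0,c]$ the bound $h(y+c)\le h(c)$ gives a contribution at most $h(c)\int_0^ch(y)\,{\rm d}y\sim 2c\,h^2(c)$ by Karamata (Lemma~\ref{bgt1}(a)), and $c\,h^2(c)=o(m(c))=o(m(t))$ by Lemma~\ref{bgt1}(b); thus this piece is $o(m(t))$. On $[c,Kc]$ the bound $h(y+c)\le h(y)$ gives a contribution at most $m(Kc)-m(c)$, which is $o(m(t))$ for each fixed $K$ because $m(Kc)\sim m(c)$ by slow variation. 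For the main piece on $[Kc,t+\ts{b}]$ I sandwich: the bound $h(y+c)\le h(y)$ yields the upper estimate $m(t+\ts{b})-m(Kc)\sim(1-a)\,m(t)$ by (A) and (B); for the lower estimate I use that $y\ge Kc$ implies $y+c\le(1+1/K)y$, so that $h(y)h(y+c)\ge h(y)h((1+1/K)y)\ge h^2((1+1/K)y)$, and a change of variables together with (A) (for $\kappa=1+1/K$), slow variation, and (B) gives a lower bound asymptotic to $(1+1/K)^{-1}(1-a)\,m(t)$.

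Combining the three pieces, for every fixed $K$ I obtain
$$\frac{1-a}{1+1/K}\;\le\;\liminf_{t\to\infty}\frac{\int_0^{t+\ts{b}}h(y)h(y+c)\,{\rm d}y}{m(t)}\;\le\;\limsup_{t\to\infty}\frac{\int_0^{t+\ts{b}}h(y)h(y+c)\,{\rm d}y}{m(t)}\;\le\;1-a.$$
Letting $K\to\infty$ pins the limit at $1-a$, which is exactly the assertion of the Lemma. The only genuinely nonroutine ingredient is identity (B), where the arbitrariness of $g$ must be controlled purely through the slowly varying function $m$; the remaining steps are standard manipulations with slowly and regularly varying functions.
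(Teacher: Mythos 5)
Your proof is correct, and it rests on the same core ingredients as the paper's own argument: the monotonicity sandwich $h^2(y+c)\le h(y)h(y+c)\le h^2(y)$ (with $c=\ts{a}-\ts{b}$), slow variation of $m$, and Karamata's theorem (Lemma~\ref{bgt1}). The bookkeeping, however, is genuinely different. The paper splits the integral at $\ts{a}$ and $t$, and on the principal range $[\ts{a},t]$ obtains the lower bound by the direct substitution $z=y+c$, yielding $m(t+c)-m(2\ts{a}-\ts{b})$; the only fact about $g$ it uses beyond \eqref{choice} is $\ts{a}/\ts{b}\to\infty$, which it merely asserts (its justification is exactly the subsequence/uniform-convergence device you spell out). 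You instead split at $c$ and $Kc$, prove the auxiliary identity (B) $m(\ts{a}-\ts{b})\sim a\,m(t)$, and get the lower bound from $h(y+c)\ge h((1+1/K)y)$, paying a factor $(1+1/K)^{-1}$ that you remove by letting $K\to\infty$. Your route has the merit that the key fact (B) is proved rather than asserted; the paper's route is more economical, since the substitution $z=y+c$ makes the parameter $K$ superfluous --- indeed, within your own scheme, $\int_{c}^{t+\ts{b}}h(y)h(y+c)\,{\rm d}y\ge m(t+\ts{b}+c)-m(2c)\ge m(t)-m(2c)\sim(1-a)\,m(t)$ by (A), (B) and slow variation, so the middle piece $[c,Kc]$ and the final $K\to\infty$ limit could be dropped entirely. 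One caveat applies to both write-ups: when $a=1$, the point $\ts{a}$ (hence $c$) need not be $O(t)$ --- e.g.\ $m(t)=\log\log t$ admits $g(t,1)=t^2$ --- so your interval $[Kc,t+\ts{b}]$ (and likewise the paper's $[\ts{a},t]$) may be empty, and statements such as ``$m(t+\ts{b})-m(Kc)\sim(1-a)m(t)$'' degenerate. This is harmless: in that regime the claim reduces to showing the numerator is $o(m(t))$, which your estimates for the first two pieces already deliver, but a sentence acknowledging the possibility would make the argument airtight.
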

\begin{proof}
We first treat the principal part of
the integral, namely, we check that
\begin{equation}\label{princ}
\lim_{t\to\infty}{\int_{\ts{a}}^t h(y)h(y+\ts{a}-\ts{b}){\rm d}y\over m(t)}=1-a,
\end{equation}
where the notation $m(t)=\int_0^t h^2(y){\rm d}y$ has to be recalled. We shall frequently use that $\lim_{t\to\infty}\ts{a}/\ts{b}=\infty$ which is a consequence of the slow variation and monotonicity of $m$. By monotonicity of $h$,
$$m(t+\ts{a}-\ts{b})-m(2\ts{a}-\ts{b})\leq \int_{\ts{a}}^t h(y)h(y+\ts{a}-\ts{b}){\rm d}y\leq m(t)-m(\ts{a})$$ which entails \eqref{princ} in view of \eqref{choice} and the slow variation of $m$.
It remains to show that
\begin{equation}\label{limit}
\lim_{t\to\infty}{\int_0^{\ts{a}}h(y)h(y+\ts{a}-\ts{b}){\rm d}y\over
m(t)}=0
\quad\text{and}\quad
\lim_{t\to\infty}{\int_t^{t+\ts{b}}h(y)h(y+\ts{a}-\ts{b}){\rm d}y\over
m(t)}=0.
\end{equation}
As for the first limit  in \eqref{limit}, we have, using again the monotonicity of $h$,
$$
\int_0^{\ts{a}}h(y)h(y+\ts{a}-\ts{b}){\rm d}y \leq h(t^{(a)}- t^{(b)}) \int_0^{t^{(a)}} h(y) {\rm d} y
=
O(\ell^2(t^{(a)}))
$$
by Lemma~\ref{bgt1}(a) since $h$ is regularly varying of index $-1/2$. On the other hand, by Lemma~\ref{bgt1}(b),
$$
\lim_{t\to \infty} \frac{\ell^2(t^{(a)})}{m(t)}
=
\lim_{t\to\infty} \frac{\ell^2(t^{(a)})} {\int_0^{t^{(a)}} y^{-1} \ell^2(y) {\rm d}y} \frac{m(t^{(a)})}{m(t)} = 0
$$
which proves the first limit relation in \eqref{limit}. Turning to the second limit relation,
we have the estimate
$$
\int_t^{t+\ts{b}}h(y)h(y+\ts{a}-\ts{b}){\rm d}y \leq h^2(t) t^{(b)} \sim \ell^2(t) t^{-1} t^{(b)} = o(\ell^2(t)) =  o(m(t)),
$$
where the last step is justified by Lemma~\ref{bgt1}(b). The proof of Lemma \ref{variance} is complete.
\end{proof}

\section{Proof of Theorem \ref{main1}}\label{proo}

The proof consists of several steps. We shall write $\ts{u}$ for $g(t,u)$.

\noindent {\sc Step 1} (Reduction to smooth $h$). The aim is to
show that without loss of generality the function $h$ can be
assumed nonincreasing (everywhere rather than eventually) and
infinitely differentiable with $e^{-t}(-h^\prime(t))$ being
nonincreasing.

By assumption, $h$ is eventually nonincreasing. Hence, there
exists an $a > 0$ such that $h$ is nonincreasing on $[a,\infty)$.
Let $h^\ast$ be a bounded, right-continuous and nonincreasing
function such that $h^\ast(t) = h(t)$ for $t\geq a$. Note that the
so defined $h^\ast$ is nonnegative. The first observation is that
replacing $h$ by $h^\ast$ in the definition of $Y$ does not change
the asymptotics. Indeed\footnote{$Y^\ast$ and $\bar{Y}$ denote the
shot noise processes with the shots occurring at times
$(S_n)_{n\in\N_0}$ and response functions $h^\ast$ and $\bar{h}$
(to be defined below) instead of $h$.}, for large enough $t$,
\begin{eqnarray*}
|Y(t)-Y^\ast (t)|&=&\bigg|\int_{(t-a,\,
t]}(h(t-y)-h^\ast(t-y)){\rm d}N(y)\bigg|\\&\leq& \sup_{y\in
[0,\,a]}|h(y)-h^\ast(y)|(N(t)-N(t-a))\\&\overset{{\rm
d}}{\leq}&\sup_{y\in [0,\,a]}|h(y)-h^\ast(y)|N(a),
\end{eqnarray*}
the last inequality following from \eqref{yyy}. The local
boundedness of $h$ and $h^\ast$ ensures the finiteness of the last
supremum. Further, for large $t$,
$$\bigg|\int_0^t(h(y)-h^\ast(y)){\rm d}y\bigg|\leq \int_0^t|h(y)-h^\ast(y)|{\rm d}y=\int_0^a |h(y)-h^\ast(y)|{\rm
d}y.$$ Since $\lim_{t\to \infty} \int_0^t h^2(y){\rm d}y=\infty$
by the assumption, we have proved that, for any $u\in [0,1]$,
\begin{equation}\label{aux}
{(Y(t+\ts{u})-\mu^{-1}\int_0^{t+\ts{u}}h(y){\rm
d}y)-(Y^\ast(t+\ts{u})-\mu^{-1}\int_0^{t+\ts{u}}h^\ast(y){\rm d}y)\over
\sqrt{\int_0^th^2(y){\rm d}y}}~\stackrel{\mathrm{P}}{\to}~ 0,
\end{equation}
where $\stackrel{\mathrm{P}}{\to}$ denotes convergence in probability.
Replacing $h^\ast(t)$ with $h^\ast(t)/h^\ast(0)$ we can and do
assume that $h^\ast(0)=1$. Then $1-h^\ast(t)$ is the distribution
function of a random variable $|\log W|$, say, where $W\in (0,1)$
a.s.

Set $\bar{h}(t):=\E \exp(-e^tW)$, $t\geq 0$ and observe that the
function $t\to e^{-t}(-\bar{h}^\prime(t))$ is nonincreasing. We
first prove that
\begin{equation}\label{reg}
\bar{h}(t)~\sim~h^\ast(t)~\sim~t^{-1/2}\ell(t).
\end{equation}
By assumption, $h^\ast(t)=\Prob\{|\log
W|>t\}=\Prob\{W<e^{-t}\}\sim t^{-1/2}\ell(t)$ as $t\to\infty$ which entails
$\Prob\{W<t\}\sim |\log t|^{-1/2}\ell(|\log t|)$ as $t\to 0+$.
Hence $\E e^{-tW}\sim (\log t)^{-1/2}\ell(\log t)$ as $t\to\infty$
by Theorem 1.7.1' in \cite{Bingham+Goldie+Teugels:1989}, and
\eqref{reg} follows.

Observe further that
\begin{eqnarray*}
|\bar{h}(t)-h^\ast(t)|&\leq& \E |\exp
(-e^tW)-\1_{\{e^tW<1\}}|\\&=&\E(1-\exp(-e^tW))\1_{\{e^tW<1\}}+\E
\exp(-e^tW)\1_{\{e^tW\geq 1\}}.
\end{eqnarray*} Since, according
to Lemma 8.1 in \cite{Iksanov+Marynych+Vatutin:2015}, the
functions $\E(1-\exp(-e^tW))\1_{\{e^tW<1\}}$ and $\E
\exp(-e^tW)\1_{\{e^tW\geq 1\}}$ are dRi on $\R^+$, so is their sum. This implies that the
function $|\bar{h}(t)-h^\ast(t)|$ is dRi because it is bounded,
continuous and dominated by a dRi function. In particular,
$\int_0^\infty |\bar{h}(y)-h^\ast(y)|{\rm d}y<\infty$ and
furthermore
$$\E\int_{[0,\,t]}|\bar{h}(t-y)-h^\ast(t-y)|{\rm
d}N(y)=O(1)$$ by Lemma \ref{bounded}. Hence, for any $u\in[0,1]$,
$${(\bar{Y}(t+\ts{u})-Y^\ast(t+\ts{u}))-\mu^{-1}(\int_0^{t+\ts{u}}\bar{h}(y){\rm
d}y-\int_0^{t+\ts{u}}h^\ast(y){\rm d}y)\over \sqrt{\int_0^th^2(y){\rm
d}y}}~\stackrel{\mathrm{P}}{\to}~ 0.$$ This in combination with
\eqref{aux} and \eqref{reg} shows that it suffices to prove that
\begin{equation}\label{limit10a}
\bigg({\sum_{k\geq 0}\bar{h}(t+\ts{u}-S_k)\1_{\{S_k\leq
t+\ts{u}\}}-\mu^{-1}\int_0^{t+\ts{u}}\bar{h}(y){\rm d}y \over
\sqrt{\sigma^2\mu^{-3}\int_0^t\bar{h}^2(y){\rm d}y}}\bigg)_{u\in
[0,1]} ~\stackrel{\mathrm{f.d.}}{\to}~ (X(u))_{u\in[0,1]}.
\end{equation}

\noindent {\sc Step 2} (Reduction to renewal
processes with stationary increments). First we  intend to prove that
\begin{equation}\label{tightness1}
{\sum_{k\geq 0}\bar{h}(t-S_k)\1_{\{S_k\leq t\}}-\sum_{k\geq
0}\bar{h}(t-S^\ast_k)\1_{\{S^\ast_k\leq t\}}\over
a(t)}~\stackrel{\mathrm{P}}{\to}~ 0
\end{equation}
for any function $a(t)$ with $\lim_{t\to\infty}a(t)=+\infty$. While doing so we make extensive use of formula \eqref{station_rw_def}.

We start with the equality
\begin{eqnarray*}
\bar{h}(t-S_k)\1_{\{S_k\leq
t\}}&-&\bar{h}(t-S^\ast_k)\1_{\{S^\ast_k\leq
t\}}\\&=&\bar{h}(t-S_k)\1_{\{S_k\leq
t<S^\ast_k\}}-\bigg(\bar{h}(t-S^\ast_k)-\bar{h}(t-S_k)\bigg)\1_{\{S^\ast_k
\leq t\}}
\end{eqnarray*}
which holds a.s. Hence
\begin{eqnarray*}
\bar{h}(t-S_k)\1_{\{S_k\leq
t\}}-\bar{h}(t-S^\ast_k)\1_{\{S^\ast_k\leq t\}}&\leq&
\bar{h}(t-S_k)\1_{\{S_k\leq
t<S^\ast_k\}}\\&=&\bar{h}(t-S_k)\1_{\{S_k\leq t,\,
S^\ast_0>t\}}\\&+&\bar{h}(t-S_k)\1_{\{t-S^\ast_0< S_k\leq t,\,
S^\ast_0\leq t\}} \ \ \text{a.s.}
\end{eqnarray*}
It is clear that $$\bigg(\sum_{k\geq 0}\bar{h}(t-S_k)\1_{\{S_k\leq
t\}}\bigg)\1_{\{S^\ast_0>t\}}~\stackrel{\mathrm{P}}{\to}~ 0.$$
Further, using monotonicity of $\bar{h}$ and \eqref{yyy} gives
$$\sum_{k\geq 0}\bar{h}(t-S_k)\1_{\{t-S^\ast_0< S_k\leq t,\, S^\ast_0\leq
t\}}\leq \bar{h}(0)\big(N(t)-N(t-S^\ast_0)\big)\1_{\{S^\ast_0\leq
t\}}\overset{d}{\leq} \bar{h}(0)N(S^\ast_0).$$ On the other hand,
\begin{eqnarray*}
\bar{h}(t-S_k)\1_{\{S_k\leq
t\}}&-&\bar{h}(t-S^\ast_k)\1_{\{S^\ast_k\leq t\}}\geq
-\bigg(\bar{h}(t-S^\ast_k)-\bar{h}(t-S_k)\bigg)\1_{\{S^\ast_k \leq
t\}}
\end{eqnarray*}
a.s. Invoking now the mean value theorem for differentiable functions
and the fact that $e^{-t}(-\bar{h}^\prime(t))$ is nonincreasing we
obtain, for some $\theta\in [t-S^\ast_k,\,t-S_k]$,
\begin{eqnarray*}
(\bar{h}(t-S^\ast_k)-\bar{h}(t-S_k))\1_{\{S^\ast_k \leq
t\}}&=&e^{-\theta}(-\bar{h}^\prime(\theta))e^\theta
S^\ast_0\1_{\{S^\ast_k \leq
t\}}\\&\leq& e^{-(t-S^\ast_k)}(-\bar{h}^\prime
(t-S^\ast_k))e^{t-S_k}\1_{\{S^\ast_k\leq
t\}}S^\ast_0\\&=&-\bar{h}^\prime (t-S^\ast_k))\1_{\{S^\ast_k\leq
t\}}S^\ast_0e^{S^\ast_0}\quad\text{a.s.}
\end{eqnarray*}
The function $t\to (-\bar{h}^\prime(t))$ is dRi because it is
positive, integrable and the function $t\to
e^{-t}(-\bar{h}^\prime(t))$ is nonincreasing. Hence $\E\sum_{k\geq
0}(-\bar{h}^\prime(t-S^\ast_k))\1_{\{S^\ast_k\leq
t\}}=O(1)$ by Lemma \ref{bounded}. Collecting pieces together we arrive at \eqref{tightness1}. In
view of \eqref{tightness1} relation \eqref{limit10a} is equivalent
to
\begin{equation}\label{limit11}
\bigg({\sum_{k\geq 0}\bar{h}(t+\ts{u}-S^\ast_k)\1_{\{S^\ast_k\leq
t+\ts{u}\}}-\mu^{-1}\int_0^{t+\ts{u}}\bar{h}(y){\rm d}y \over
\sqrt{\sigma^2\mu^{-3}\int_0^t\bar{h}^2(y){\rm d}y}}\bigg)_{u\in
[0,1]} ~\stackrel{\mathrm{f.d.}}{\to}~ (X(u))_{u\in[0,1]}.
\end{equation}

While proving \eqref{limit11} the Cram\'{e}r-Wold device
(see Theorem 29.4 on p.~397 in \cite{Billingsley:1986}) allows us
to work with linear combinations of vector components rather than
with vectors themselves, i.e., it suffices to check that
\begin{equation}\label{limit1}
\sum_{i=1}^n \alpha_i {\sum_{k\geq
0}\bar{h}(t+\ts{u_i}-S^\ast_k)\1_{\{S^\ast_k\leq
t+\ts{u_i}\}}-\mu^{-1}\int_0^{t+\ts{u_i}} \bar{h}(y){\rm d}y\over
\sqrt{\sigma^2\mu^{-3}\int_0^t \bar{h}^2(y){\rm d}y}}
~\stackrel{\mathrm{d}}{\to}~ \sum_{i=1}^n \alpha_i X(u_i)
\end{equation}
for any $n\in\N$, any real $\alpha_1,\ldots, \alpha_n$ and any
$0\leq u_1<\ldots<u_n\leq 1$. Observe that the random variable on
the right-hand side of \eqref{limit1} has the normal distribution
with mean zero and variance $\sum_{i=1}^n\alpha_i^2+2\sum_{1\leq
k<m\leq n}\alpha_k\alpha_m (1-u_m)$.

Integrating by parts we see that the numerator of the left-hand
side of \eqref{limit1} equals
\begin{eqnarray*}
&&\sum_{i=1}^n \alpha_i\int_{[0,\,t+\ts{u_i}]}\bar{h}(t+\ts{u_i}-y)
{\rm d}(N^\ast(y)-\mu^{-1}y)\\&=&
\sum_{i=1}^n\alpha_i\bigg(\bar{h}(t+\ts{u_i})\big(N^\ast(t+\ts{u_i})-\mu^{-1}(t+\ts{u_i})\big)\\&+&
\int_{[0,\,t+\ts{u_i}]}(N^\ast(t+\ts{u_i})-N^\ast((t+\ts{u_i}-y)-)-\mu^{-1}y){\rm
d}(-\bar{h}(y))\bigg).
\end{eqnarray*}
Recall (see \eqref{reg}) that $\bar{h}$ is regularly varying at
$\infty$ of index $-1/2$. Hence
\begin{equation}\label{useful}
\lim_{t\to\infty} {t\bar{h}^2(t)\over \int_0^t\bar{h}^2(y){\rm
d}y}=0
\end{equation}
by Lemma \ref{bgt1}(b) with $r=\bar{h}^2$. Further,
$(N^\ast(t)-\mu^{-1}t)/\sqrt{\sigma^2\mu^{-3}t}$ converges in
distribution\footnote{This follows from the distributional
convergence of $(N(t)-\mu^{-1}(t))/\sqrt{\sigma^2\mu^{-3}t}$ to
the standard normal law (see, for instance, Theorem 5.2 on p.~59
in \cite{Gut:2009}), the representation
$N^\ast(t)=N(t-S_0^\ast)\1_{\{S^\ast_0\leq t\}}$ and
distributional subadditivity \eqref{yyy}.} to the standard normal
law, whence
$$\sum_{i=1}^n \alpha_i{\sqrt{t}\bar{h}(t+\ts{u_i})\over
\sqrt{\int_0^t \bar{h}^2(y){\rm
d}y}}{N^\ast(t+\ts{u_i})-\mu^{-1}(t+\ts{u_i})\over
\sqrt{t}}~\stackrel{\mathrm{P}}{\to}~ 0$$
which shows that
\eqref{limit1} is equivalent to
\begin{equation}\label{limit8a}
\sum_{i=1}^n\alpha_i
{\int_{[0,\,t+\ts{u_i}]}(N^\ast(t+\ts{u_i})-N^\ast((t+\ts{u_i}-y)-)-\mu^{-1}y){\rm
d}(-\bar{h}(y))\over \sqrt{\sigma^2\mu^{-3}\int_0^t
\bar{h}^2(y){\rm d}y}}~\stackrel{\mathrm{d}}{\to}~ \sum_{i=1}^n
\alpha_i X(u_i).
\end{equation}
Reversing the time at the point $t+t^{(u_n)}$ by means of \eqref{eq:N^ backward in time}, we conclude that the left-hand side of \eqref{limit8a} has the same distribution as
\begin{equation*}
\frac{{\sum_{i=1}^n\alpha_i}\int_{[0,\,t+\ts{u_i}]}(N^\ast(y+\ts{u_n}-\ts{u_i})-N^\ast(\ts{u_n}-\ts{u_i})-\mu^{-1}y){\rm
d}(-\bar{h}(y))}{\sqrt{\sigma^2\mu^{-3}\int_0^t
\bar{h}^2(y){\rm d}y}}.
\end{equation*}
Setting $r_m:=\ts{u_n}-\ts{u_{n-m}}$  for  $m=0,\ldots, n-1$ we rewrite \eqref{limit8a} as
\begin{equation}\label{limit8}
\frac{{\sum_{i=1}^n\alpha_i}\int_{[0,\,t+\ts{u_i}]}(N^\ast(y+r_{n-i})-N^\ast(r_{n-i})-\mu^{-1}y){\rm
d}(-\bar{h}(y))}{\sqrt{\sigma^2\mu^{-3}\int_0^t
\bar{h}^2(y){\rm d}y}}\stackrel{\mathrm{d}}{\to} \sum_{i=1}^n
\alpha_i X(u_i).
\end{equation}

\noindent {\sc Step 3} (Reduction to independence). The purpose of the following construction is to replace the increments $(N^{\ast}(y+r_{n-i})-N^{\ast}(r_{n-i}))_{r_{n-i} \leq y \leq r_{n-i+1}}$ (which are dependent) by independent copies of these. Essentially, the overshoots of the random walk $(S_k)_{k\in \N_0}$ at the points $r_{1}, \ldots, r_{n-1}$ are sequentially replaced by independent copies of the random variable $S_0^{\ast}$ while keeping all other increments unchanged.

Let $S_{0,0},\ldots, S_{0,n-1}$ denote independent copies of
$S_0^\ast$ which are also independent of $(\xi_k)_{k\in\N}$.
Further, starting with $$S_k^{\ast(0)}:=S_k^\ast,\quad k\in\N_0,
\quad N^{\ast(0)}(s):=\inf\{k\in\N_0: S_k^{\ast(0)}>s\},\quad
s\geq 0$$ and
$$N^{(0)}(s):=\inf\{k\in\N_0:S_{N^{\ast(0)}(r_1)+k}^{\ast(0)}-S_{N^{\ast(0)}(r_1)}^{\ast(0)}>s\},\quad
s\geq 0$$ we define successively for $m=1,\ldots, n-1$
$$S_k^{\ast(m)}:=r_m+S^{\ast(m-1)}_{N^{\ast(m-1)}(r_m)+k}-S^{\ast(m-1)}_{N^{\ast(m-1)}(r_m)}+S_{0,m},\quad k\in\N_0,$$
$$N^{\ast(m)}(s):=\inf\{k\in\N_0: S_k^{\ast(m)}>s\},\quad s\geq r_m$$ and $$N^{(m)}(s):=\inf\{k\in\N_0: S_{N^{\ast(m-1)}(r_m)+k}^{\ast(m-1)}-
S_{N^{\ast(m-1)}(r_m)}^{\ast(m-1)}>s\},\quad s\geq 0.$$ Observe
that the process $(N^{\ast(m)}(s+r_m))_{s\geq 0}$ is a copy of
$(N^\ast(s))_{s\geq 0}$, and furthermore $(N^{\ast
(m)}(s))_{r_m\leq s\leq r_{m+1}}$ for $m=0,\ldots, n-2$ and
$(N^{\ast(n-1)}(s))_{s\geq r_{n-1}}$ are jointly independent.

The numerator in \eqref{limit8} equals $\sum_{j=1}^n\theta_j+R(t)$,
where
\begin{eqnarray*}
\theta_1&:=&\int_{[0,\,t+\ts{u_1}]}(N^\ast(y+\ts{u_n}-\ts{u_1})-N^\ast(\ts{u_n}-\ts{u_1})\\
&-&\mu^{-1}y){\rm d}\bigg(-\sum_{k=1}^n\alpha_k\bar{h}(y+\ts{u_k}-\ts{u_1})\bigg),
\end{eqnarray*}
\begin{eqnarray*}
\theta_j&:=&\int_{[0,\,\ts{u_j}-\ts{u_{j-1}}]}(N^\ast(y+\ts{u_n}-\ts{u_j})-N^\ast(\ts{u_n}-\ts{u_j})\\
&-&\mu^{-1}y){\rm
d}\bigg(-\sum_{k=j}^n\alpha_k\bar{h}(y+\ts{u_k}-\ts{u_j})\bigg).
\end{eqnarray*}
for $j=2,\ldots, n$ and
\begin{eqnarray*}
R(t)&:=&\sum_{k=2}^{n}\alpha_k \big(N^{\ast}(t^{(u_n)}-t^{(u_1)})-N^{\ast}(t^{(u_n)}-t^{(u_k)})-\mu^{-1}(t^{(u_k)}-t^{(u_1)})\big)\\&\times&\big(\bar{h}(t^{(u_k)}-t^{(u_1)})-\bar{h}(t+t^{(u_k)})\big)\\
&+&\sum_{j=2}^{n-1}\sum_{k=j}^{n}\alpha_k \big(N^{\ast}(t^{(u_n)}-t^{(u_j)})-N^{\ast}(t^{(u_n)}-t^{(u_k)})
-\mu^{-1}(t^{(u_k)}-t^{(u_j)})\big)\\&\times&\big(\bar{h}(t^{(u_k)}-t^{(u_j)})-\bar{h}(t^{(u_k)}-t^{(u_{j-1})})\big)\\&=:&R_1(t)+R_2(t).
\end{eqnarray*}
In view of \eqref{eq:N^ backward in time}, we have
\begin{equation*}
R_1(t)\overset{{\rm d}}{=} \sum_{k=2}^{n}\alpha_k\big(N^{\ast}(t^{(u_k)}-t^{(u_1)})-\mu^{-1}(t^{(u_k)}-t^{(u_1)})\big)\big(\bar{h}(t^{(u_k)}-t^{(u_1)})-\bar{h}(t+t^{(u_k)})\big)
\end{equation*}
and
\begin{eqnarray*}
R_2(t)&\overset{{\rm d}}{=}& \sum_{j=2}^{n-1}\sum_{k=j}^{n}\alpha_k\big(N^{\ast}(t^{(u_k)}-t^{(u_j)})-\mu^{-1}(t^{(u_k)}-t^{(u_j)})\big)\\&\times&
\big(\bar{h}(t^{(u_k)}-t^{(u_j)})-\bar{h}(t^{(u_k)}-t^{(u_{j-1})})\big),
\end{eqnarray*}
whence
$$
\frac{R(t)}{\sqrt{\sigma^2\mu^{-3}\int_0^t
\bar{h}^2(y){\rm d}y}}~\stackrel{\mathrm{P}}{\to}~ 0,\quad t\to\infty
$$
having utilized the central limit theorem for $N^\ast(t)$ (see Step 2), \eqref{useful} and Slutsky's lemma.

Thus, up to a term which tends to zero in probability the numerator in \eqref{limit8} equals the sum $\sum_{k=1}^n\theta_k$ of {\it dependent} random variables. Now we intend to show that instead of this sum we can work with the sum $\sum_{k=1}^n\theta_k^\prime$ of {\it independent} random variables, where $$\theta_1^\prime:=\int_{[0,\,t+t^{(u_1)}]}(N^{\ast(n-1)}(y+\ts{u_n}-\ts{u_1})-\mu^{-1}y){\rm d}\bigg(-\sum_{k=1}^n\alpha_k\bar{h}(y+\ts{u_k}-\ts{u_1})\bigg)$$ and, for $j=2,\ldots, n$,
$$\theta_j^\prime:=\int_{[0,\,\ts{u_j}-\ts{u_{j-1}}]}(N^{\ast(n-j)}(y+\ts{u_n}-\ts{u_j})-\mu^{-1}y){\rm
d}\bigg(-\sum_{k=j}^n\alpha_k\bar{h}(y+\ts{u_k}-\ts{u_j})\bigg).$$

To justify the replacement we shall show that, for $m=1,\ldots, n-1$ and $y\geq 0$,
\begin{equation}\label{ind}
\E|N^\ast(y+r_m)-N^\ast(r_m)-N^{\ast(m)}(y+r_m)|\leq a_m c<\infty,
\end{equation}
where $a_m:=2^m-1$ and $c=2\E N(S_0^\ast)+\E N(y_0)$ for $y_0$ large enough. Note that $c<\infty$ because $\E\xi^2<\infty$ entails $\E S_0^\ast<\infty$ and $\E N(y)\leq \mu^{-1}y+{\rm const}$ for all $y\geq 0$ (Lorden's inequality).

We first prove that, for $m=1,\ldots, n-1$,
\begin{equation}\label{ind1}
I_m:=\E|N^{\ast(m-1)}(y+r_m)-N^{\ast(m-1)}(r_m)-N^{\ast(m)}(y+r_m)|\leq c.
\end{equation}
Indeed,
\begin{eqnarray*}
&&\hspace{-1.5cm}N^{\ast(m-1)}(y+r_m)-N^{\ast(m-1)}(r_m)-N^{\ast(m)}(y+r_m)\\&=&
\sum_{k\geq
0}\1_{\{S_{N^{(m-1)}(r_m)+k}^{\ast(m-1)}-S_{N^{\ast(m-1)}(r_m)}^{\ast(m-1)}\leq
y-(S_{N^{\ast(m-1)}(r_m)}^{\ast(m-1)} -r_m)\}}\\&-&\sum_{k\geq
0}\1_{\{S_{N^{\ast(m-1)}(r_m)+k}^{\ast(m-1)}-S_{N^{\ast(m-1)}(r_m)}^{\ast(m-1)}\leq
y-S_{0,m}\}}\\&=& N^{(m)}(y-\eta_m)\1_{\{\eta_m\leq y\}}-
N^{(m)}(y-S_{0,m})\1_{\{S_{0,m}\leq y\}},
\end{eqnarray*}
where $\eta_m:=S_{N^{\ast(m-1)}(r_m)}^{\ast(m-1)} -r_m$. 
Note that $(N^{(m)}(t))_{t\geq 0}$ is a copy of $(N(t))_{t\geq 0}$
independent of both $\eta_m$ and $S_{0,m}$. The last two random
variables are independent copies of $S_0^\ast$. Further, the
inequality $\E S_0^\ast<\infty$ entails $\lim_{y\to\infty}\E
N(y)\Prob\{S_0^\ast>y\}=0$ because $\E N(y)\sim \mu^{-1}y$ as
$y\to\infty$ by the elementary renewal theorem. With these at hand
we have
\begin{eqnarray*}
I_m&=&\E|N^{(m)}(y-\eta_m)-N^{(m)}(y-S_{0,m})|\1_{\{\eta_m\leq
y,S_{0,m}\leq y\}}\\&+&\E N^{(m)}(y-S_{0,m})\1_{\{\eta_m>y,
S_{0,m}\leq y\}}+\E N^{(m)}(y-\eta_m)\1_{\{\eta_m\leq
y,S_{0,m}>y\}}\\&\leq& \E N^{(m)}(|\eta_m-S_{0,m}|)+2\E
N(y)\Prob\{S_0^\ast>y\}\\&\leq&2\E N(S_0^\ast)+\E N(y_0)
\end{eqnarray*}
for large enough $y_0$, having utilized twice the distributional
subadditivity of $N^{(m)}(t)$ (see \eqref{yyy}) for the first term
on the right-hand side.

To check \eqref{ind} we use mathematical induction. The case $m=1$ has already been settled by \eqref{ind1} (with $m=1$). Suppose \eqref{ind} holds for all $m\leq j-1<n-1$. Then
\begin{eqnarray*}
&&\hspace{-1.5cm}\E|N^\ast(y+r_j)-N^\ast(r_j)-N^{\ast(j)}(y+r_j)|\\&\leq& \E|N^\ast(y+r_j)-N^\ast(r_{j-1})-N^{\ast(j-1)}(y+r_j)|\\&+&\E|N^{\ast(j-1)}(y+r_j)-N^{\ast(j-1)}(r_j)-N^{\ast(j)}(y+r_j)|\\&+&\E |N^{\ast(j-1)}(r_j)+N^\ast(r_{j-1})-N^\ast(r_j)|\leq (2a_{j-1}+1)c=a_jc
\end{eqnarray*}
because the first term does not exceed $a_{j-1}c$ (use \eqref{ind} with $m=j-1$ and $y$ replaced with $y+r_j-r_{j-1}$), the second term does not exceed $c$ (use \eqref{ind1} with $m=j$) and the third term does not exceed $a_{j-1} c$ (use \eqref{ind} with $m=j-1$ and $y=r_j-r_{j-1}$).

Now \eqref{ind} reveals that \eqref{limit8} is equivalent to $${\sum_{k=1}^n \theta_k^\prime \over \sqrt{\sigma^2\mu^{-3}\int_0^t
\bar{h}^2(y){\rm d}y}}~\stackrel{\mathrm{d}}{\to}~ \sum_{i=1}^n
\alpha_i X(u_i).$$

\noindent {\sc Step 4} (Replacing $N^\ast(t)$ with a Brownian
motion). Let $W_0,\ldots, W_{n-1}$ denote independent Brownian motions such that $W_k$ approximates $N^{\ast(k)}(\cdot+\ts{u_n}-\ts{u_{n-k}})$ in the sense\footnote{Recall that $N^{\ast(k)}(\cdot+\ts{u_n}-\ts{u_{n-k}})$ is a renewal process with stationary increments.} of Lemma \ref{appr}.

We claim that
\begin{eqnarray}\label{limit10}
K_{n-1}(t)&:=& \bigg(\int_0^t \bar{h}^2(y){\rm
d}y\bigg)^{-1/2}\int_{[0,\,t+\ts{u_1}]}|N^{\ast(n-1)}(y+\ts{u_n}-\ts{u_1})-\mu^{-1}y\notag\\&-&\sigma\mu^{-3/2}W_{n-1}(y)|{\rm d}\bigg(-\sum_{k=1}^n\alpha_k\bar{h}(y+\ts{u_k}-\ts{u_1})\bigg)~\stackrel{\mathrm{P}}{\to}~ 0
\end{eqnarray}
and that, for $j=2,\ldots, n$,
\begin{eqnarray}\label{limit500}
K_{n-j}(t)&:=& \bigg(\int_0^t \bar{h}^2(y){\rm
d}y\bigg)^{-1/2}\int_{[0,\,\ts{u_j}-\ts{u_{j-1}}]}|N^{\ast(n-j)}(y+\ts{u_n}-\ts{u_j})-\mu^{-1}y\notag\\&-&\sigma\mu^{-3/2}W_{n-j}(y)|{\rm
d}\bigg(-\sum_{k=j}^n\alpha_k\bar{h}(y+\ts{u_k}-\ts{u_j})\bigg)~\stackrel{\mathrm{P}}{\to}~
0.
\end{eqnarray}
With $t_0$ and $A$ as defined in Lemma \ref{appr}, \eqref{limit10}
follows from the inequality
\begin{eqnarray*}
K_{n-1}(t)&\leq& K_{n-1}(t)\1_{\{t_0>t+\ts{u_1}\}}+\bigg(\int_0^t
\bar{h}^2(y){\rm d}y\bigg)^{-1/2}
\\&\times&\bigg(\int_{[0,\,t_0]}|N^{\ast(n-1)}(y+\ts{u_n}-\ts{u_1})-\mu^{-1}y\\&-&\sigma\mu^{-3/2}W_{n-1}(y)|{\rm
d}\bigg(-\sum_{k=1}^n\alpha_k\bar{h}(y+\ts{u_k}-\ts{u_1})\bigg)
\\&+&A \int_{(t_0,\,t+\ts{u_1}]}y^{1/r}
{\rm
d}\bigg(-\sum_{k=1}^n\alpha_k\bar{h}(y+\ts{u_k}-\ts{u_1})\bigg)\bigg)\1_{\{t_0\leq
t+\ts{u_1}\}}
\end{eqnarray*}
because the first two terms on the right-hand side trivially
converge to zero in probability, whereas the third does so, for
the integral $\int_{(t_0,\,\infty)} y^{1/r} {\rm d}(-\bar{h}(y))$
converges (use integration by parts). Relation \eqref{limit500} can be checked along the same lines.

Relations \eqref{limit10} and \eqref{limit500} demonstrate that we reduced the original problem to showing that $${\sum_{k=1}^n \theta_k^{\prime\prime} \over \sqrt{\int_0^t
\bar{h}^2(y){\rm d}y}}~\stackrel{\mathrm{d}}{\to}~ \sum_{i=1}^n
\alpha_i X(u_i),$$ where
$$\theta_1^{\prime\prime}:=\int_{[0,\,t+\ts{u_1}]}W_{n-1}(y){\rm d}\bigg(-\sum_{k=1}^n\alpha_k\bar{h}(y+\ts{u_k}-\ts{u_1})\bigg)$$
and, for $j=2,\ldots, n$,
$$\theta_j^{\prime\prime}:=\int_{[0,\,\ts{u_j}-\ts{u_{j-1}}]}W_{n-j}(y){\rm d}\bigg(-\sum_{k=j}^n\alpha_k\bar{h}(y+\ts{u_k}-\ts{u_j})\bigg).$$
Since $\sum_{k=1}^n \theta_k^{\prime\prime}$ is the sum of {\it independent}
centered {\it Gaussian} random variables it remains to check that $$\Var\,\left(\sum_{k=1}^n\theta_k^{\prime\prime}\right)=\sum_{k=1}^n\Var\,\theta_k^{\prime\prime}~\sim~
\bigg(\sum_{i=1}^n\alpha_i^2+2\sum_{1\leq k<m\leq
n}\alpha_k\alpha_m (1-u_m)\bigg)\int_0^t\bar{h}^2(y){\rm
d}y.$$

Writing the integral defining $\theta_1^{\prime\prime}$ as the limit of
integral sums we infer
\begin{eqnarray*}
\Var\,\theta_1^{\prime\prime}&=&\int_0^{t+\ts{u_1}}\bigg(\sum_{k=1}^n\alpha_k
(\bar{h}(y+\ts{u_k}-\ts{u_1})-\bar{h}(t+\ts{u_k}) \bigg)^2{\rm
d}y\\&=&\sum_{k=1}^n
\alpha_k^2\bigg(\int_{\ts{u_k}-\ts{u_1}}^{t+\ts{u_k}}\bar{h}^2(y){\rm
d}y-2
\bar{h}(t+\ts{u_k})\int_{\ts{u_k}-\ts{u_1}}^{t+\ts{u_k}}\bar{h}(y){\rm
d}y\\&+&(t+\ts{u_1})\bar{h}^2(t+\ts{u_k})\bigg)+2\sum_{1\leq
i<j\leq
n}\alpha_i\alpha_j\bigg(\int_{\ts{u_i}-\ts{u_1}}^{t+\ts{u_i}}\bar{h}(y)\bar{h}(y+\ts{u_j}-\ts{u_i}){\rm
d}y\\&-&
\bar{h}(t+\ts{u_i})\int_{\ts{u_j}-\ts{u_1}}^{t+\ts{u_j}}\bar{h}(y){\rm
d}y-
\bar{h}(t+\ts{u_j})\int_{\ts{u_i}-\ts{u_1}}^{t+\ts{u_i}}\bar{h}(y){\rm
d}y\\&+&(t+\ts{u_1})
\bar{h}(t+\ts{u_i})\bar{h}(t+\ts{u_j})\bigg)\\&=&\sum_{k=1}^n
\alpha_k^2\int_{\ts{u_k}-\ts{u_1}}^{t+\ts{u_k}}\bar{h}^2(y){\rm
d}y\\&+&2\sum_{1\leq i<j\leq n}\alpha_i\alpha_j
\int_{\ts{u_i}-\ts{u_1}}^{t+\ts{u_i}}\bar{h}(y)
\bar{h}(y+\ts{u_j}-\ts{u_i}){\rm d}y+o\bigg(\int_0^t
\bar{h}^2(y){\rm d}y\bigg).
\end{eqnarray*}
The last $o$-term appears because the second, fifth and sixth
terms on the right-hand side of the second equality above are
bounded, whereas the third and seventh terms are $o\bigg(\int_0^t
\bar{h}^2(y){\rm d}y\bigg)$ by \eqref{useful}. Arguing similarly
we obtain, for $m=2,\ldots,n$,
\begin{eqnarray*}
\Var\,\theta_m^{\prime\prime}&=&\int_0^{\ts{u_m}-\ts{u_{m-1}}}\bigg(\sum_{k=m}^n\alpha_k
(\bar{h}(y+\ts{u_k}-\ts{u_m})-
\bar{h}(\ts{u_k}-\ts{u_{m-1}})\bigg)^2{\rm
d}y\\&=&\sum_{k=m}^n\alpha_k^2\int_{\ts{u_k}-\ts{u_m}}^{\ts{u_k}-\ts{u_{m-1}}}\bar{h}^2(y){\rm
d}y\\&+&2\sum_{m\leq i<j\leq
n}\alpha_i\alpha_j\int_{\ts{u_i}-\ts{u_m}}^{\ts{u_i}-\ts{u_{m-1}}}\bar{h}(y)\bar{h}(y+\ts{u_j}-\ts{u_i}){\rm
d}y+o\bigg(\int_0^t\bar{h}^2(y){\rm d}y\bigg).
\end{eqnarray*}
Using these calculations we infer
\begin{eqnarray*}
{\Var\,\big(\sum_{k=1}^n\theta_k^{\prime\prime}\big)\over \int_0^t\bar{h}^2(y){\rm
d}y}&=&\sum_{k=1}^n\alpha_k^2{\int_0^{t+\ts{u_k}}\bar{h}^2(y){\rm
d}y\over \int_0^t \bar{h}^2(y){\rm d}y}\\&+&2\sum_{1\leq i<j\leq
n}\alpha_i\alpha_j
{\int_0^{t+\ts{u_i}}\bar{h}(y)\bar{h}(y+\ts{u_j}-\ts{u_i}){\rm
d}y\over \int_0^t\bar{h}^2(y){\rm d}y}+o(1).
\end{eqnarray*}
As $t\to\infty$, the coefficient of $\alpha_k^2$,
$k=1,\ldots, n$, converges to one. An appeal to Lemma \ref{variance} enables us to conclude that, as $t\to\infty$,
the coefficient of $2\alpha_i\alpha_j$, $1\leq i<j\leq
n$, converges to $1-u_j$. The proof of Theorem \ref{main1} is complete.

\vspace{1cm}
\noindent   {\bf Acknowledgements}  \quad
\footnotesize
We thank the referee for a very careful reading and useful comments which helped improving the presentation. A part of this work was done while A.~Iksanov was visiting M\"{u}nster in
January/February and July 2015. He gratefully acknowledges hospitality and the financial support by DFG SFB 878 "Geometry, Groups and Actions".
The work of A.~Marynych was supported by the Alexander von Humboldt Foundation.
\normalsize

\end{document}